\newcommand{\highlight}[1]{{#1}}
\newtheorem{theorem}{Theorem}[section]
\newtheorem{lemma}[theorem]{Lemma}
\begin{document}
\title{Large-volume open sets in normed spaces without integral distances}
\author{Sascha Kurz and Valery Mishkin}

\address{Sascha Kurz, University of Bayreuth, D-95440 Bayreuth, Germany, sascha.kurz@uni-bayreuth.de\\
Valery Mishkin, York University, Toronto ON, M3J1P3 Canada, vmichkin@mathstat.yorku.ca}

\maketitle

\begin{abstract}
  We study open sets $\mathcal{P}$ in normed spaces $X$ attaining a large volume while avoiding pairs of points at integral
  distance.  The proposed task is to find sharp inequalities for the maximum possible $d$-dimensional volume. This problem can be
  viewed as an opposite to known problems on point sets with pairwise integral or rational distances. 
\end{abstract}

\section{Introduction}
For quite some time it was not known whether there exist seven points in the Euclidean plane, no three on a line, no four on a circle, 
with pairwise integral distances. Kreisel and Kurz~\cite{1145.52010} found such a set of size~$7$, but it is unknown if there
exists one of size~$8$.\footnote{\highlight{We remark that, even earlier,  Noll and Simmons, see http://www.isthe.com/chongo/tech/math/n-cluster, 
found those sets in 2006 with the additional property of having integral coordinates, so-called $7_2$-clusters.}} The hunt for those \highlight{point} sets 
was initiated by Ulam in 1945 by asking for a dense point set in the plane with pairwise rational distances.

Here we study a kind of opposite problem, recently \highlight{considered} by the authors for Euclidean spaces, see \cite{mishkin_kurz}: Given 
a normed 
space $X$, what is the maximum volume $f(X,n)$ of an open set $\mathcal{P}\subseteq X$ with $n$ connected
components without a pair of points at integral distance. We \highlight{drop} some technical assumptions for the normed spaces $X$
and mostly consider \highlight{the} Euclidean spaces $\mathbb{E}^d$ or $\mathbb{R}^d$ equipped with a $p$-norm.
\highlight{In Theorem~\ref{main_thm} we state an explicit formula for the Euclidean case $f(\mathbb{E}^d,n)$.}

\section{Basic notation and first observations}
We assume that our normed space $X$ admits a measure, which we denote by $\highlight{\lambda_X}$. By $\mathcal{B}_X$ we denote the \highlight{open}
ball \highlight{with diameter one} in $X$, i.e.\ the set of points \highlight{with} distance \highlight{smaller than} $\frac{1}{2}$ from a given center.
\highlight{In the special case} $X=(\mathbb{R}^d,\Vert\cdot\Vert_p)$ with 
$\Vert \left(x_1,\dots,x_d\right)\Vert_X=\Vert \left(x_1,\dots,x_d\right)\Vert_p:=\left(\sum_{i=1}^d |x_i|^p\right)^{\frac{1}{p}}$, where $p\in\mathbb{R}_{>0}\cup\highlight{\{}\infty\highlight{\}}$, we have
$$
  \lambda_X(\mathcal{B}_X)=\Gamma\left(\frac{1}{p}+1\right)^d   / \Gamma\left(\frac{d}{p}+1\right),
$$
\highlight{where $\Gamma$ denotes the famous Gamma function, i.e.\ the extension of the factorial function.}
In the Manhattan metric, i.e.\ $p=1$, the volumes of the resulting cross-polytopes \highlight{equal} $\frac{1}{d!}$ and in the maximum norm, i.e.\
$p=\infty$, the volumes of the resulting hypercubes \highlight{equal} $1$.

At first we observe that the diameter of a connected component $\mathcal{C}$ of a set $\mathcal{P}\subseteq X$ avoiding
integral distances is at most $1$.\footnote{\highlight{We remark that the maximum volume of a set with diameter $1$ in $X$ is
at most $\lambda_X(\mathcal{B}_X)$, see e.g.\ \cite{Melnikov}.}} Otherwise we \highlight{can} consider two points $u,v\in \mathcal{C}$
having a distance larger \highlight{than} $1$
and conclude the existence of a point $w\in \mathcal{C}$ on the curve connecting $u$ with $v$ in $\mathcal{C}$ such that the
distance between $u$ and $v$ is exactly $1$.

Given two points $u,v\in X$, where $\Vert v-u\Vert_X=1$, we may consider the line
$
  \mathcal{L}:=\{u+\alpha (v-u)\mid \alpha\in\mathbb{R}\}\highlight{\subseteq} X
$.
The restriction of $X$ to $\mathcal{L}$ yields another, one-di\-men\-sional, normed space, where we can \highlight{pose} the same question. 

We consider the map $\varphi: \mathcal{L}\to [0,1)$, $p\mapsto \alpha\mod 1$ where 
$p=u+\alpha(v-u)\in \mathcal{L}$. If $\varphi$ is not injective on $\mathcal{P}\cap\mathcal{L}$, 
the set $\mathcal{P}$ contains a pair of point an integral distance apart. Since the map
$\varphi$ is length preserving modulo $1$, that is, $\Vert p_1-p_2\Vert_X \mod 1=| \varphi(p_1)-\varphi(p_2)|$,
its restriction to the connected components of $\mathcal{P}\cap \mathcal{L}$ is length preserving. 
Thus, we conclude the necessary condition for an open set avoiding integral distances that the
length of each intersection with a line is at most $1$.


Having those two necessary conditions, i.e.\ \highlight{the diameter} of \highlight{each} connected component \highlight{is at most $1$} and
\highlight{the length of} each line intersection \highlight{is at most $1$}, 
at hand we define $l(X,n)$ as the maximum volume of open sets in $X$ with $n$ connected components, which satisfy the \highlight{two} necessary conditions. 
\highlight{We thus} have $f(X,n)\le l(X,n)\le n\cdot \lambda_X(\mathcal{B}_X)$ for all $n\in\mathbb{N}$.

In \cite{mishkin_kurz} the authors \highlight{provided an example} of \highlight{a connected} open set $U\subseteq\mathbb{R}^d$
such that the intersection \highlight{of $U$} with each line has a total length of at most $1$ but the volume of \highlight{$U$} is unbounded.

Based on a simple averaging argument, any given upper bound on one of the two introduced maximum volumes for $n$ components
induces an upper bound for $k\ge n$ components in the same normed space $X$:
\begin{lemma}For each $k\ge n$ we have,
  \label{lemma_averaging}
  \begin{itemize}
    \item $l(X,k)\le \frac{k}{n}\cdot \Lambda$ whenever $l(X,n)\le \Lambda$;
    \item $f(X,k)\le \frac{k}{n}\cdot \Lambda$ whenever $f(X,n)\le \Lambda$.
  \end{itemize}
\end{lemma}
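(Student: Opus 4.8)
The plan is a double-counting argument over sub-collections of the connected components. Let $\mathcal{P}=\mathcal{C}_1\cup\dots\cup\mathcal{C}_k$ be an open set in $X$ with connected components $\mathcal{C}_1,\dots,\mathcal{C}_k$, and assume that $\mathcal{P}$ satisfies the two necessary conditions (for the first item) respectively avoids integral distances (for the second item). First I would record that both properties are hereditary: for any index set $S\subseteq\{1,\dots,k\}$ the open set $\mathcal{P}_S:=\bigcup_{i\in S}\mathcal{C}_i$ still has exactly $|S|$ connected components (each $\mathcal{C}_i$ remains a maximal connected subset), each of diameter at most $1$; every line meets $\mathcal{P}_S$ in a subset of its intersection with $\mathcal{P}$, hence in a set of length at most $1$; and if $\mathcal{P}$ contains no two points at integral distance, then neither does the smaller set $\mathcal{P}_S$. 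In particular, choosing $|S|=n$, each $\mathcal{P}_S$ is an admissible configuration for $l(X,n)$ (respectively $f(X,n)$), so $\lambda_X(\mathcal{P}_S)\le\Lambda$.

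Next I would sum this inequality over all $\binom{k}{n}$ subsets $S$ of size $n$. By additivity of $\lambda_X$ on the pairwise disjoint open sets $\mathcal{C}_i$, and since each fixed index $i$ lies in exactly $\binom{k-1}{n-1}$ of these subsets, the left-hand side equals $\binom{k-1}{n-1}\sum_{i=1}^{k}\lambda_X(\mathcal{C}_i)=\binom{k-1}{n-1}\,\lambda_X(\mathcal{P})$, while the right-hand side is $\binom{k}{n}\,\Lambda$. Dividing and using the elementary identity $\binom{k}{n}/\binom{k-1}{n-1}=k/n$ gives $\lambda_X(\mathcal{P})\le\tfrac{k}{n}\,\Lambda$. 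Since $\mathcal{P}$ was an arbitrary admissible configuration with $k$ components, taking the supremum over all such $\mathcal{P}$ yields $l(X,k)\le\tfrac{k}{n}\,\Lambda$, and verbatim the same computation yields $f(X,k)\le\tfrac{k}{n}\,\Lambda$.

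A minor point to settle is finiteness of the sums: if $\Lambda<\infty$, then each $\mathcal{C}_i$ has finite volume, being contained in some $\mathcal{P}_S$ with $\lambda_X(\mathcal{P}_S)\le\Lambda$, so all sums above are finite and the manipulation is legitimate; if $\Lambda=\infty$ the asserted bound is vacuous. The only substantive step is the heredity observation — in particular that deleting components neither increases a line-intersection length nor creates a forbidden integral distance nor merges two components — but this is immediate from the definitions, so I do not expect a real obstacle here; once it is in place, the argument is purely combinatorial.
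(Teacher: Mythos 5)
Your double-counting over all $\binom{k}{n}$ sub-collections of components is exactly the ``simple averaging argument'' the paper invokes without detail, and it is carried out correctly, including the heredity of both admissibility conditions under deleting components. No gaps; this matches the paper's (implicit) approach.
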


For lower bounds we consider the following construction. Given a small constant $0<\varepsilon<\frac{1}{n}$, we arrange one open ball of diameter 
$1-(n-1)\varepsilon$ and $n-1$ open balls of diameter $\varepsilon$ each, so that there centers are aligned on a line and that they are non-intersecting.
Since everything fits into an open ball of diameter $1$ there cannot be a pair of points at integral distance. \highlight{As} $\varepsilon\to 0$ the
volume of the constructed set approaches $\lambda_X(\mathcal{B}_X)$, so that we have
$$
   \lambda_X(\mathcal{B}_X)\le f(X,n)\le l(X,n).
$$
\highlight{We then} have $f(X,1)\le l(X,1)=\lambda_X(\mathcal{B}_X)$. \highlight{The} map $\varphi$ \highlight{shows that} equality is also
attained for normed spaces $X$ of dimension $1$. So,  in the following we consider sets consisting of at least two components and normed spaces of
dimension at least two.

\section{Two components}
\label{sec_two_components}
For one component the extremal example was the open ball of diameter $1$. By choosing the line through the centers of two balls of diameter $1$ we obtain
a line intersection of total length \highlight{two}, so that this cannot happen in an integral distance avoiding set. The idea to circumvent this fact is
to truncate the open balls in direction of the line connecting the centers so that both components have a width of almost $\frac{1}{2}$, see 
Figure~\ref{fig_two_component_construction_euclidean_plane} for the Euclidean plane $\mathbb{E}^2$.

\begin{figure}[htp]
  \begin{center}
    \includegraphics{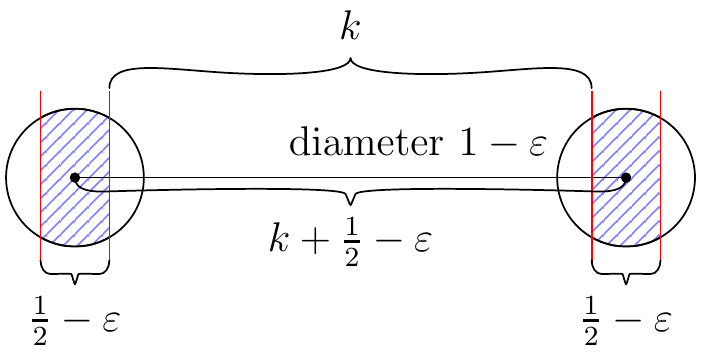}
    \caption{Truncated \highlight{disks} -- a construction of $2$ components in $\mathbb{E}^2$ without integral distances.}
    \label{fig_two_component_construction_euclidean_plane}
  \end{center}
\end{figure}

The volume $V$ of a convex body $\mathcal{K}\subset \mathbb{E}^d$ with diameter $D$ and minimal width $\omega$ is bounded above by
\begin{equation}
  \label{ie_isoperimetric_width}
  V\le \lambda_{\mathbb{E}^{d-1}}(\mathcal{B}_{\mathbb{E}^{d-1}})\cdot D^d\int_0^{\arcsin \frac{\omega}{D}}\cos ^d\theta\,\mbox{d}\theta,
\end{equation}
see e.g.\ \cite[Theorem 1]{1074.52004}. Equality holds \highlight{iff} $\mathcal{K}$ is the $d$-dimensional spherical symmetric slice with
diameter~$D$ and minimal width~$\omega$. We can easily check that the maximum volume of two $d$-dimensional spherical symmetric slices with
diameter $1$ each and minimal widths $\omega_1$ and $\omega_2$, respectively, so that $\omega_1+\omega_2\le 1$ is attained \highlight{at}
$\omega_1=\omega_2=\frac{1}{2}$, independently from the dimension.

Motivated by this fact we generally define $S_X$ \highlight{to be a} spherical symmetric slice with diameter $1$ and width $\frac{1}{2}$, i.e.\ a truncated
open ball. In the $d$-dimensional Euclidean case we have
\begin{equation}
  \lambda_{\mathbb{E}^d}(S_{\mathbb{E}^d})= \lambda_{\mathbb{E}^{d-1}}(\mathcal{B}_{\mathbb{E}^{d-1}})\int\limits_0^{\frac{\pi}{6}}
  \cos ^d\theta\,\mbox{d}\theta.
\end{equation}
The truncated \highlight{disc} in dimension $d=2$ has an area of $\frac{\sqrt{3}}{8}+\frac{\pi}{12}\approx 0.4783$.

\begin{figure}[htp]
  \begin{center}
    \includegraphics{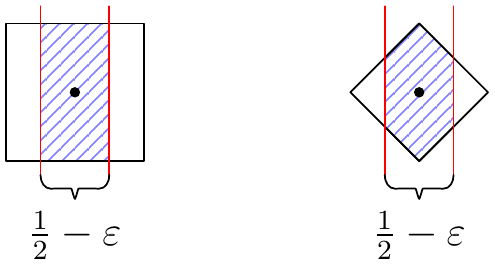}
    \caption{Spherical slices in dimension $2$ for the maximum norm, $p=\infty$, and the Manhattan metric, $p=1$.}
    \label{fig_spherical_slices}
  \end{center}
\end{figure}

On the left hand side of Figure~\ref{fig_spherical_slices} we have drawn the spherical slice, \highlight{i.e.\ a truncated ball,} in dimension $2$ for the maximum norm, i.e.\ $p=\infty$.
For general dimension $d$ we have $\lambda_{X}(S_X)=\frac{1}{2}$. On the right hand side of Figure~\ref{fig_spherical_slices} we have drawn the spherical
slice in dimension $2$ for the Manhattan metric, i.e.\ $p=1$. Here we have $\lambda_{X}(S_X)=\frac{1}{d!}-\frac{1}{2^{d-2}(d-1)!}$.

Since the line through the upper left \textit{corner} of the left component and the lower right \textit{corner} of the right component should have an
intersection with the \highlight{shaded region} of total length at most $1$, we consider truncated open balls of diameter $1-\varepsilon$ and width $\frac{1}{2}-\varepsilon$, 
see Figure~\ref{fig_two_component_construction_euclidean_plane}. For some special normed spaces $X$ we can choose $\varepsilon>0$ and move the centers of
the two components sufficiently  away from each other \highlight{such that} we can guarantee that no line intersection has a total length of more \highlight{than} $1$.

\begin{lemma}
   \label{lemma_two_components}
   For arbitrary dimension $d\ge 2$ and normed spaces $X=(\mathbb{R}^d,\Vert\cdot\Vert_p)$ with \highlight{$1<p<\infty$} we have $l(X,2)\ge f(X,2)\ge 2\lambda_{X}(S_{X})$.
\end{lemma}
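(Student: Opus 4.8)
The plan is to realise the claimed volume by an explicit set $\mathcal{P}$ with two components, each a slightly over-truncated copy of $S_X$, whose centres are pushed far apart along a fixed coordinate axis. Since $f(X,2)\le l(X,2)$ is already known, it suffices to prove $f(X,2)\ge 2\lambda_X(S_X)$.

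I would fix a small parameter $0<\varepsilon<\tfrac12$, write $e=(1,0,\dots,0)$, take $\mathcal{B}_X$ centred at the origin, put $w:=\tfrac12-\varepsilon$, and consider
\[
  S_\varepsilon\ :=\ \mathcal{B}_X\cap\bigl\{x:\ |x_1|<\tfrac14-\tfrac\varepsilon2\bigr\},
\]
a truncated open ball of diameter $1$ whose projection onto the $x_1$-axis is exactly the interval $\bigl(-\tfrac w2,\tfrac w2\bigr)$; as $\varepsilon\downarrow 0$ these sets increase to $S_X=\mathcal{B}_X\cap\{|x_1|<\tfrac14\}$. For a large positive integer $N$, to be fixed at the very end, I would take
\[
  \mathcal{P}\ :=\ S_\varepsilon\ \cup\ \bigl(S_\varepsilon+(N+\tfrac12)\,e\bigr).
\]
The two translates are disjoint --- their $x_1$-projections are the intervals $\bigl(-\tfrac w2,\tfrac w2\bigr)$ and $\bigl(N+\tfrac12-\tfrac w2,\,N+\tfrac12+\tfrac w2\bigr)$, which are separated because $N+\tfrac12>w$ --- so $\mathcal{P}$ has exactly two (convex, hence connected) components and $\lambda_X(\mathcal{P})=2\lambda_X(S_\varepsilon)$.

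The substance of the argument is to verify that $\mathcal{P}$ contains no two distinct points at integral distance. Two points in the same component lie in a common open $p$-ball of diameter $1$, hence at distance strictly between $0$ and $1$. For $a$ in the first and $b$ in the second component I would write $b-a=(N+\tfrac12)\,e+z$ with $z\in S_\varepsilon-S_\varepsilon$, so that $|z_1|<w$ and $\Vert z\Vert_p<1$; then on one side
\[
  \Vert b-a\Vert_p\ \ge\ |b_1-a_1|\ =\ N+\tfrac12+z_1\ >\ N+\tfrac12-w\ =\ N+\varepsilon,
\]
and on the other side, from $\Vert b-a\Vert_p^p=(N+\tfrac12+z_1)^p+\sum_{i\ge 2}|z_i|^p\le(N+\tfrac12+w)^p+1$ together with the elementary inequality $(A^p+B^p)^{1/p}\le A+\tfrac{B^p}{p\,A^{p-1}}$ (for $A>0,\ B\ge 0,\ p\ge 1$, by concavity of $t\mapsto(A^p+t)^{1/p}$),
\[
  \Vert b-a\Vert_p\ <\ N+\tfrac12+w+\frac{1}{p\,(N+\tfrac12)^{p-1}}\ =\ N+1-\varepsilon+\frac{1}{p\,(N+\tfrac12)^{p-1}} .
\]
Because $p>1$, the error term tends to $0$ as $N\to\infty$; choosing $N$ large enough that it stays below $\varepsilon$, the entire set of distances realised between the two components lies in the open interval $(N,N+1)$ and contains no integer. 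Hence $\mathcal{P}$ is admissible and $f(X,2)\ge\lambda_X(\mathcal{P})=2\lambda_X(S_\varepsilon)$. Finally, letting $\varepsilon\downarrow 0$, monotone convergence gives $\lambda_X(S_\varepsilon)\uparrow\lambda_X(S_X)$, whence $l(X,2)\ge f(X,2)\ge 2\lambda_X(S_X)$.

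The crux --- and the only point where $1<p<\infty$ is genuinely used --- is the two-sided distance estimate above: pushing the components far apart forces $\Vert b-a\Vert_p$ to be essentially the axial distance $N+\tfrac12+z_1$, the transverse contribution $\sum_{i\ge 2}|z_i|^p$ being damped by the $p$-th root to a correction of order $(N+\tfrac12)^{-(p-1)}$. For $p=1$ this damping is absent --- the transverse displacement is simply added, so the span of realised distances exceeds $1$ and cannot be squeezed between two consecutive integers --- which is precisely why $p=1$ must be excluded; and for $p=\infty$ the assertion is anyway weaker than the trivial bound $f(X,2)\ge\lambda_X(\mathcal{B}_X)$. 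The harmless over-truncation to $x_1$-width $\tfrac12-\varepsilon$ rather than $\tfrac12$ is what supplies the slack (of size $2\varepsilon$) that lets the distance interval fit strictly inside $(N,N+1)$; with the full width $\tfrac12$ a pair at distance exactly $N+1$ would appear.
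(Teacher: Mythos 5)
Your proposal is correct and follows essentially the same route as the paper: two width-truncated open balls separated by a large integer-plus-half shift along a coordinate axis, with the $\varepsilon$-slack in the width absorbing the transverse contribution for $p>1$, followed by $\varepsilon\downarrow 0$. Your version is in fact somewhat more careful than the paper's (explicit concavity bound $(A^p+B^p)^{1/p}\le A+\tfrac{B^p}{pA^{p-1}}$ and the remark on why $p=1$ fails), the only cosmetic difference being that you keep the full diameter $1$ and truncate only the width, whereas the paper shrinks the diameter to $1-\varepsilon$ as well.
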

\begin{proof}
  For a given small $\varepsilon>0$ place a truncated open ball of diameter $1-\varepsilon$ and width $\frac{1}{2}-\varepsilon$ with its center at
  the origin and a second copy so that the two centers are at distance $k+\frac{1}{2}-\varepsilon$, see
  \highlight{Figure}~\ref{fig_two_component_construction_euclidean_plane}
  for $p=d=2$. Since both components have \highlight{diameters smaller} than $1$ there cannot be a pair of points at integral distance within the same
  component. So let
  $a=(a_1,\dots,a_d)$ be a point of the left and $b=(b_1,\dots,b_d)$ a point of the right component, where we assume that the centers of the components
  are moved apart along the first coordinate axis. By construction the distance between $a$ and $b$ is at least $k$. Since $\left|a_i-b_i\right|<1-\varepsilon$
  for all $2\le i\le d$ and $\left|a_1-b_1\right|<k+1-2\varepsilon$ the distance between $a$ and $b$ is less than
  $$
    \Bigl((1-\varepsilon)^p\cdot(d-1)+(k+1-2\varepsilon)^p\Bigr)^{\frac{1}{p}}.
  $$
  By choosing a sufficiently large integer $k$ we can guarantee that this term is at most $k+1$, so that there is no pair of points at integral distance.
  Finally, we consider the limit \highlight{as} $\varepsilon\to 0$.
\end{proof}

We conjecture that the lower bound from Lemma~\ref{lemma_two_components} is sharp for all $p>1$ and remark that this is true for the Euclidean case $p=2$ 
\highlight{by} Inequality~(\ref{ie_isoperimetric_width}). 

\section{Relation to finite point sets with pairwise odd integral distances}
In this section we restrict the connected components to open balls of diameter $\frac{1}{2}$. We remark that if an integral distance avoiding set
contains an open ball of diameter $0<D<1$, which fits into one of its components, then the other components can contain open balls of diameter at most $1-D$. 
One can easily check that the maximum volume of the entire set $\mathcal{P}$ is attained \highlight{at} $D=\frac{1}{2}$, at least for $p$-norms and
dimensions $d\ge 2$. If
the set $\mathcal{P}$, i.e.\ the collection of $n$ open balls of diameter $\frac{1}{2}$, does not contain a pair of points at integral distance, then
the mutual distances between centers of different balls have to be elements of $\mathbb{Z}+\frac{1}{2}$. \highlight{Therefore dilating} $\mathcal{P}$ by a factor of
$2$ \highlight{yields the set} $\mathcal{Q}$ of the centers of the \highlight{$n$} balls with pairwise odd integral distances. 

\highlight{However,} for Euclidean spaces $\mathbb{E}^d$, \highlight{it is known, see \cite{0277.10021},} that $|\mathcal{Q}|\le d+2$, where equality is possible
if and only if $d\equiv 2\pmod {16}$. It would be interesting to \highlight{determine} the maximum number of odd integral distances in other normed spaces.

\section{Large-volume open sets with diameter and maximum length of line intersections at most one}
Assuming that the construction using truncated open balls from Section~\ref{sec_two_components} is best possible or, at the very least, competitive, we can try
to arrange $n$ copies of those $S_X$. Since we have to \highlight{control} that each line meets at most two components we cannot arrange the centers on 
\highlight{a certain} line. On the other hand, the cutting \highlight{directions}, i.e.\ the \highlight{directions} where we cut of the caps from the open
balls, should be almost equal. To \highlight{meet} both \highlight{requirements}, we
arrange the centers of the components on a parabola, where each component has diameter $1-\varepsilon$ and width $\frac{1}{2}-\varepsilon$, for a small
constant $\varepsilon>0$, see Figure~\ref{fig_n_component_construction} for an example in \highlight{$\mathbb{E}^2$}.

\begin{figure}[htp]
  \begin{center}
    \includegraphics{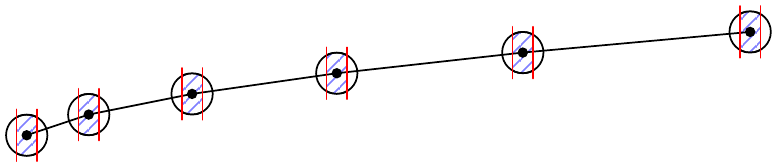}
    \caption{Truncated \highlight{discs} -- arranged on a parabola.}
    \label{fig_n_component_construction}
  \end{center}
\end{figure}

\begin{lemma}
   \label{lemma_n_components}
   For arbitrary dimension $d\ge 2$, $n\ge 2$, and normed spaces $X=(\mathbb{R}^d,\Vert\cdot\Vert_p)$ with $p>1$ we have $l(X,n)\ge n\lambda_{X}(S_{X})$.
\end{lemma}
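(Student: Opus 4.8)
The plan is to generalize the two-component construction from Lemma~\ref{lemma_two_components} by placing the $n$ centers along a parabola, as suggested in Figure~\ref{fig_n_component_construction}, and to verify that for a suitably chosen parabola (scaled appropriately with a large integer parameter $k$) the resulting union of $n$ truncated open balls of diameter $1-\varepsilon$ and width $\tfrac12-\varepsilon$ satisfies the two necessary conditions: each component has diameter at most $1$, and each line meets the union in total length at most $1$. Letting $\varepsilon\to 0$ then yields the bound $l(X,n)\ge n\lambda_X(S_X)$.

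First I would set up coordinates. Write the first coordinate axis as the "spread" direction and take the centers to be $c_j = (t_j,\, \beta t_j^2,\,0,\dots,0)$ for $j=1,\dots,n$, where the $t_j$ are widely spaced (on the order of a large integer $k$ apart) and the curvature $\beta>0$ is chosen small, so that over the relevant range of $t$ the parabola is nearly flat; each component is a copy of the truncated ball $S_X$ (scaled by $1-\varepsilon$) with its flat faces perpendicular to the first coordinate axis, i.e.\ cut in the same direction for every component. The point of the parabola is twofold: because it is strictly convex, any straight line can cross a thin neighborhood of it in at most two "clusters", so a line can meet at most two of the well-separated components, which bounds the line-intersection length by the two-component analysis; and because the parabola is nearly flat on the scale of the components, the cutting direction is essentially the same for all of them, so the truncation in the first coordinate genuinely limits the line length.

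Next I would carry out the two verifications. For the diameter condition there is nothing new: each component is an isometric copy of a set of diameter $1-\varepsilon<1$. For pairs of points in distinct components $i\neq j$, the argument is exactly as in Lemma~\ref{lemma_two_components}: by choosing the spacing of the $t_j$ to be large integers (and $\beta$ small enough that the quadratic term contributes negligibly to the inter-center distance compared to the integer gap, or is itself absorbed into the estimate), the $p$-norm distance $\Vert a-b\Vert_p$ between $a$ in component $i$ and $b$ in component $j$ lies strictly between consecutive integers, using $1<p<\infty$ exactly as before — the coordinates transverse to the spread direction differ by less than $1-\varepsilon$, and the dominant first coordinate pins the distance into an open unit interval. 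The genuinely new point is the line-intersection bound: I must show no line meets three components, and that a line meeting two components picks up total length at most $1$. The first part follows from strict convexity of the parabola together with the wide spacing — a line that comes within the small radius of three of the $c_j$ would have to stay within a narrow tube around three far-apart points of a strictly convex curve, which is impossible once $\beta$ and the tube radius are balanced against the spacing. For the second part, a line meeting components $i$ and $j$ is, after projecting out everything but the first coordinate and using that the cutting hyperplanes are all orthogonal to that axis, in the same situation as the two-component case: the portion of its length inside each truncated ball, measured in the first coordinate, is less than $\tfrac12-\varepsilon$, hence the total is less than $1-2\varepsilon<1$; one must check that the $p$-norm length along the line, as opposed to the first-coordinate displacement, is controlled, but since the relevant segments are nearly parallel to the first axis (the components are far apart in that direction), this costs only a factor $1+o(1)$ which is absorbed by the $\varepsilon$ slack.

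The main obstacle I expect is making the geometry of "a line meets at most two components" fully rigorous in arbitrary dimension $d$ and for a general $p$-norm, rather than relying on the planar picture: one needs a clean quantitative statement of the form "if $\beta$ is small, the $t_j$ are spaced by at least $k$, and $k$ is large, then any line intersecting the $\varepsilon$-neighborhoods of three of the points $(t_j,\beta t_j^2,0,\dots,0)$ leads to a contradiction", and then a similarly careful estimate that the $p$-length a line spends in two truncated balls is at most $1$. Both are elementary but require choosing the parameters $\beta,k,\varepsilon$ in the right order ($\varepsilon$ first, then $\beta$ small depending on $n$ and $\varepsilon$, then $k$ large depending on everything); I would state the construction with these dependencies explicit and then let $\varepsilon\to 0$ to conclude.
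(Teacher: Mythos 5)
Your construction coincides with the paper's: $n$ truncated open balls of diameter $1-\varepsilon$ and width $\tfrac12-\varepsilon$ with centers on a parabola, strict convexity plus wide spacing to rule out a line meeting three components, the two-component calculation for lines meeting two, and $\varepsilon\to 0$ at the end. The two verifications that actually define $l(X,n)$ --- diameter of each component at most $1$ and total line-intersection length at most $1$ --- are the ones you need, and you identify and handle both correctly. Two points require repair, however. The paragraph claiming that for $a,b$ in distinct components the distance $\Vert a-b\Vert_p$ can be pinned ``strictly between consecutive integers, exactly as in Lemma~\ref{lemma_two_components}'' is both superfluous and false for $n\ge 3$. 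It is superfluous because $l(X,n)$ is defined by the two necessary conditions alone; integral-distance avoidance is what $f(X,n)$ demands, not $l(X,n)$. It is false because pinning each pairwise distance into an open unit interval forces every center distance to be congruent to roughly $\tfrac12$ modulo $1$, and this is additively inconsistent for three or more centers: if $t_2-t_1\equiv t_3-t_2\equiv\tfrac12 \pmod 1$ then $t_3-t_1\equiv 0\pmod 1$. This obstruction is precisely why the paper invokes the $p$-gon construction, Lemma~\ref{lemma_diagonals_p_gon} and Weyl's theorem to lower-bound $f(X,n)$; were your claim correct, that entire section would be unnecessary. Deleting the paragraph leaves the proof of the stated lemma intact.

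Second, your declared parameter order ($\varepsilon$ first, then $\beta$ depending only on $n$ and $\varepsilon$, then $k$ large) fails as written: with $\beta$ fixed and $t_j\sim jk$, the second-coordinate gap between centers grows like $\beta k^2$ while the first-coordinate gap grows like $k$, so as $k\to\infty$ the connecting directions rotate away from the first axis and the truncation no longer controls the line length. You in fact need $\beta$ inside a $k$-dependent window, roughly $C/k^2\le\beta\le \varepsilon/(2nk)$ (nonempty once $k$ is large relative to $n/\varepsilon$): the lower end makes the sagitta $\beta(t_{i+1}-t_i)(t_{i+2}-t_{i+1})$ large enough that no line meets three components, while the upper end keeps every connecting direction within angle $O(\varepsilon)$ of the cutting direction, so that the width of each truncated ball in that direction stays below $\tfrac12$. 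With that adjustment your argument is exactly the paper's (itself only sketched) proof.
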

\begin{proof}
  For a given small $\varepsilon>0$ consider $n$ truncated $d$-dimensional balls $S_X$ of width $\frac{1}{2}-\varepsilon$, where the truncation is oriented
  in the direction of the $y$-axis, with centers located at $\left(i\cdot k,i\cdot k^2,0,\dots,0\right)$ and diameter $1-\varepsilon$ for
  $1\le i\le n$, see Figure~\ref{fig_n_component_construction}. \highlight{For $k$ large,} there is no line intersecting three or
  more components. It remains to \highlight{check} that each line meeting two $d$-dimensional balls centered at $C_1$, $C_2$ has
  an intersection of \highlight{length} at most $\frac{1}{2}$ with each of the \highlight{truncated} balls. This can be done by performing a similar
  calculation as in the proof of Lemma~\ref{lemma_two_components}. Again, we consider the \highlight{limiting configuration as} $\varepsilon\to 0$.
\end{proof}

We conjecture that the lower bound \highlight{in} Lemma~\ref{lemma_n_components} is sharp.

\section{Using results from Diophantine Approximation}
In order to modify the construction from the previous section to obtain a lower bound for $f(X,n)$, we use results from Diophantine Approximation 
\highlight{and:}
\begin{lemma} 
  \label{lemma_diagonals_p_gon}
  Given an odd prime $p$, let $\alpha_j=\frac{\zeta_j-\zeta_{2p-j}}{i}$ for $1\le j\le \frac{p-1}{2}$, where the $\zeta_j$ are $2p$th
  roots of unity. Then the $\alpha_j$ are irrational and linearly independent over $\mathbb{Q}$.
\end{lemma}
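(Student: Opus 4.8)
The plan is to reduce the assertion to the elementary fact that the powers $1,\omega,\dots,\omega^{p-2}$ of a primitive $2p$-th root of unity $\omega$ form a $\mathbb{Q}$-basis of $\mathbb{Q}(\zeta_{2p})$. I would begin by rewriting $\alpha_j$ explicitly: taking $\zeta_j=e^{2\pi i j/(2p)}=e^{\pi i j/p}$ and $\omega:=e^{\pi i/p}$, one has $\zeta_{2p-j}=\omega^{-j}=\overline{\zeta_j}$, so that $i\alpha_j=\omega^j-\omega^{-j}$ and hence $\alpha_j=2\sin(\pi j/p)$, a positive real number for $1\le j\le\frac{p-1}{2}$. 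It is convenient to prove the slightly stronger claim that $1,\alpha_1,\dots,\alpha_{(p-1)/2}$ are linearly independent over $\mathbb{Q}$: the irrationality of each $\alpha_j$ is then the instance of this involving only $1$ and $\alpha_j$, while linear independence of the $\alpha_j$ alone is inherited by the subset $\{\alpha_1,\dots,\alpha_{(p-1)/2}\}$. So I would assume a rational relation $c_0+\sum_{j=1}^{(p-1)/2}c_j\alpha_j=0$ and aim to show that all $c_j=0$.

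The first step is to multiply this relation by $i$ and use $i\alpha_j=\omega^j-\omega^{-j}$, giving $\sum_{j=1}^{(p-1)/2}c_j(\omega^j-\omega^{-j})=-i\,c_0$. The left-hand side lies in $\mathbb{Q}(\omega)=\mathbb{Q}(\zeta_{2p})$, which does not contain $i$ because $4$ does not divide $2p$; hence $c_0=0$. (It is worth noting that the $\alpha_j$ themselves are \emph{not} in $\mathbb{Q}(\zeta_{2p})$, which is precisely why the factor $i$ is introduced — it pulls the relation back into the smaller cyclotomic field.) Next I would use $\omega^p=-1$ to write $\omega^{-j}=\omega^{2p-j}=-\omega^{p-j}$, so the surviving relation becomes $\sum_{j=1}^{(p-1)/2}c_j\bigl(\omega^j+\omega^{p-j}\bigr)=0$. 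Because $p$ is odd, as $j$ runs over $1,\dots,\frac{p-1}{2}$ the exponents $j$ and $p-j$ sweep out each element of $\{1,\dots,p-1\}$ exactly once, so the relation reads $\sum_{e=1}^{p-1}d_e\omega^e=0$ with each $d_e$ equal to one of the $c_j$. Multiplying by $\omega^{-1}$ turns this into $\sum_{f=0}^{p-2}d_{f+1}\omega^f=0$, and since $\omega$ is a root of the irreducible degree-$(p-1)$ cyclotomic polynomial $\Phi_{2p}$, the powers $1,\omega,\dots,\omega^{p-2}$ are $\mathbb{Q}$-linearly independent; therefore all $d_e$, and hence all $c_j$, vanish.

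I expect the argument to be short once the right normalisation is in place; there is no real need for Diophantine-approximation or heavy Galois-theoretic input (although one could instead apply the automorphisms in $\mathrm{Gal}(\mathbb{Q}(\zeta_{4p})/\mathbb{Q})$ to the supposed relation, the route above seems shortest). The two points I would double-check are: (i) the deduction $c_0=0$, which genuinely relies on $i\notin\mathbb{Q}(\zeta_{2p})$ for odd $p$; and (ii) the claim that $j\mapsto\{j,\,p-j\}$ enumerates $\{1,\dots,p-1\}$ without repetition, which uses that $p$ is odd (so $j\neq p-j$) and would fail for even moduli. Everything else is routine manipulation of roots of unity.
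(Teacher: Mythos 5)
Your argument is correct, and it takes a genuinely different route from the paper: the paper gives no proof of this lemma at all, deferring instead to \cite{mishkin_kurz}, where the statement is derived from Mann's theorem on vanishing sums of roots of unity \cite{0138.03102}. You instead reduce everything, after the normalisation $\alpha_j=2\sin(\pi j/p)$, to the elementary fact that $1,\omega,\dots,\omega^{p-2}$ is a $\mathbb{Q}$-basis of $\mathbb{Q}(\zeta_{2p})$, i.e.\ to the irreducibility of the degree-$(p-1)$ cyclotomic polynomial $\Phi_{2p}$. Both of the steps you flag as delicate are sound: $i\notin\mathbb{Q}(\zeta_{2p})$ because the only roots of unity in $\mathbb{Q}(\zeta_{2p})$ are the $2p$-th ones and $4$ does not divide $2p$ for odd $p$ (equivalently, $[\mathbb{Q}(\zeta_{4p}):\mathbb{Q}(\zeta_{2p})]=2$), which forces $c_0=0$; and the exponents $j$ and $p-j$ for $1\le j\le\frac{p-1}{2}$ do enumerate $\{1,\dots,p-1\}$ exactly once, so after dividing by $\omega$ your relation becomes a vanishing combination of power-basis elements and all coefficients must be zero. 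What your approach buys is self-containedness --- no structural theorem about vanishing sums of roots of unity is needed --- but it leans essentially on $p$ being prime, which is what makes the surviving exponents (after the sign flip $\omega^{-j}=-\omega^{p-j}$) land exactly on a power basis of the cyclotomic field; Mann's theorem is the heavier tool one would reach for if the lemma were to be extended to composite moduli, where the exponents no longer align with a basis so conveniently. For the lemma as stated, your elementary proof is complete.
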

A proof, based on a theorem of vanishing sums of roots of unity by Mann \cite{0138.03102}, is given in \cite{mishkin_kurz}.

This result can be \highlight{applied to construct} sets avoiding integral distances as follows. \highlight{We} fix an odd prime $p$ with $p\ge n$.  For
each integer $k\ge 2$ and each $\frac{1}{4}>\varepsilon>0$ we consider a regular $p$-gon $P$ with side lengths $2k\cdot\sin\Bigl(\frac{\pi}{p}\Bigr)$,
i.e.\ with \highlight{circumradius} $k$. At $n$ arbitrarily chosen vertices of the $p$-gon $P$ we place the centers of $d$-dimensional open balls with diameter
$1-\varepsilon$. Since \highlight{the diameter of each of the $n$ components is} less than $1$ there is no pair of points at integral distance inside
one of these $n$
components. For each pair of centers \highlight{$c_1$} and \highlight{$c_2$}, we cut off the corresponding two components such that each component has
a width of $\frac{1}{2}-\varepsilon$ in that direction, see Figure~\ref{fig_pentagon_construction} for an example with $n=p=5$.

\begin{figure}[htp]
  \begin{center}
    \includegraphics[width=5cm]{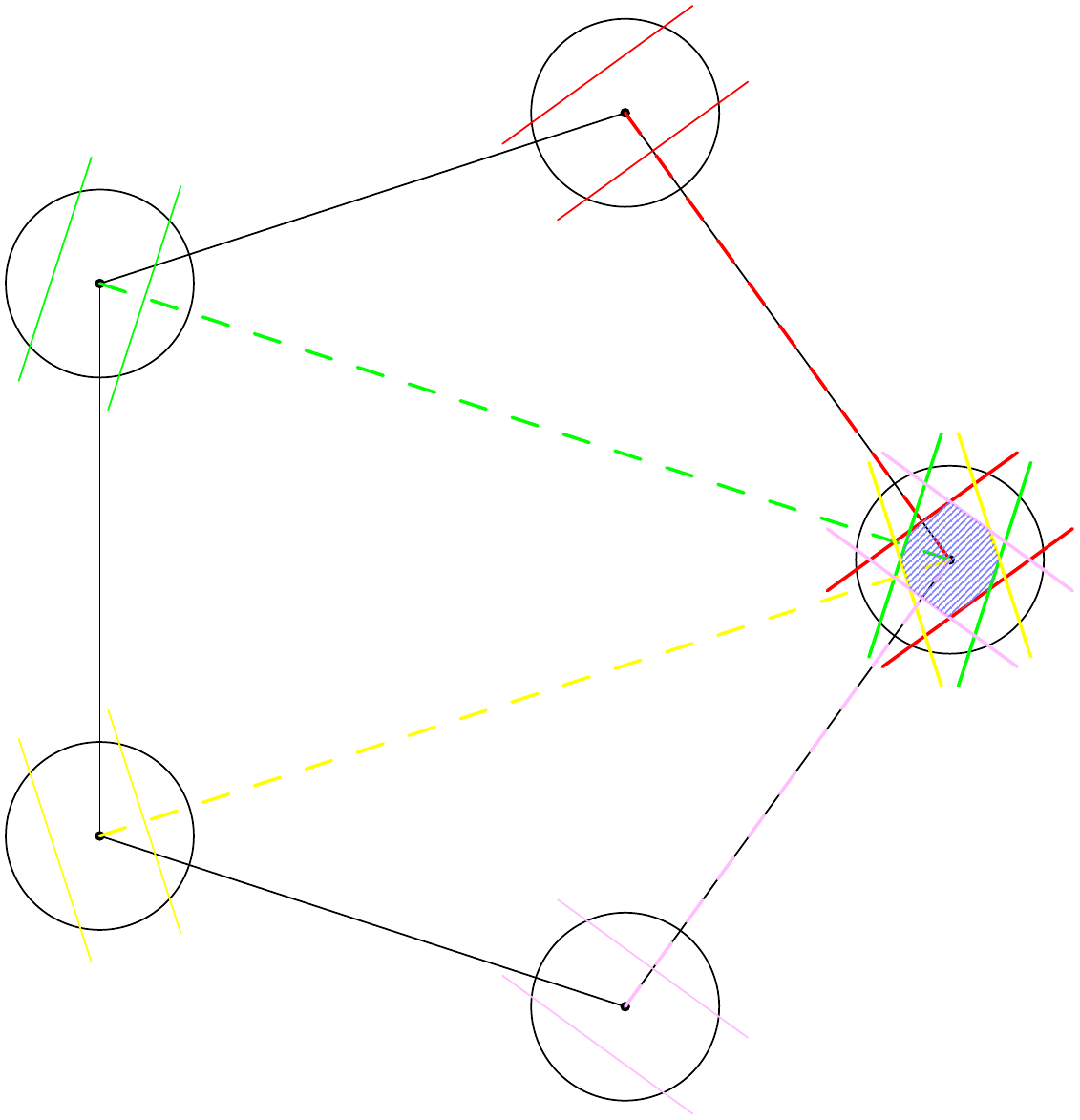}
    \caption{$p$-gon construction: Integral distance avoiding point set for $d=2$  and $p=n=5$.}
    \label{fig_pentagon_construction}
  \end{center}
\end{figure}

Next we consider two points $a$ and $b$ from \highlight{different} components. \highlight{We denote by $\alpha$} the distance of the centers of the
corresponding components. From the triangle inequality we conclude
$$
  \alpha-\left(\frac{1-2\varepsilon}{2}\right)<\operatorname{dist}(a,b)<\alpha+\left(\frac{1-2\varepsilon}{2}\right).
$$
Since the occurring distances $\alpha$ are given by $2k\sin\Bigl(\frac{j\pi}{p}\Bigr)$ for $1\le j\le \frac{p-1}{2}$ we look
for a \highlight{solution of the following system of inequalities}
\begin{equation}
  \left\{2k\cdot\sin\!\left(\frac{j\pi}{p}\right)-\frac{1}{2}+\varepsilon\right\}\le 2\varepsilon
\end{equation}
with $k\in\mathbb{N}$, where $\highlight{\{}\beta\highlight{\}}$ denotes the positive fractional part of a real number~$\beta$, i.e.\ there exists an integer $l$ with
$\beta=l+\highlight{\{}\beta\highlight{\}}$ and $0\le \highlight{\{}\beta\highlight{\}}<1$. By Lemma~\ref{lemma_diagonals_p_gon} the 
factors $2\sin\!\left(\frac{j\pi}{p}\right)$ are
irrational and linearly independent over $\mathbb{Q}$, so by Weyl's Theorem \cite{46.0278.06} the systems admit a solution for all $k$. (Actually
we only use the denseness result, which Weyl \highlight{himself attributed} to Kronecker.) We call the just described construction the $p$-gon construction.

\highlight{These ingredients we provided in more detail in \cite{mishkin_kurz} enable us to establish} the conjectured exact \highlight{values} of
\highlight{the function} $f\Bigl((\mathbb{E}^d,\Vert\cdot\Vert_2),n\Bigr))$:

\begin{theorem}
  \label{main_thm}
  For all $n,d\ge 2$ we have $$f\Bigl(\mathbb{E}^d,n\Bigr)=n\cdot\lambda_{\mathbb{E}^d}(S_{\mathbb{E}^d}).$$
\end{theorem}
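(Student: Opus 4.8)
The plan is to prove the two matching inequalities $f(\mathbb{E}^d,n)\le n\lambda_{\mathbb{E}^d}(S_{\mathbb{E}^d})$ and $f(\mathbb{E}^d,n)\ge n\lambda_{\mathbb{E}^d}(S_{\mathbb{E}^d})$, getting the first by reduction to two components and the second from the $p$-gon construction. For the reduction, note that once $f(\mathbb{E}^d,2)\le 2\lambda_{\mathbb{E}^d}(S_{\mathbb{E}^d})$ is known, Lemma~\ref{lemma_averaging}, applied with parameter $2$ and $\Lambda=2\lambda_{\mathbb{E}^d}(S_{\mathbb{E}^d})$, gives $f(\mathbb{E}^d,k)\le\tfrac k2\cdot 2\lambda_{\mathbb{E}^d}(S_{\mathbb{E}^d})=k\lambda_{\mathbb{E}^d}(S_{\mathbb{E}^d})$ for every $k\ge 2$. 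So the whole upper bound rests on the two-component case.

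For $f(\mathbb{E}^d,2)$, let $\mathcal P=\mathcal C_1\cup\mathcal C_2$ avoid integral distances. Since the components are connected, the set of distances realised between $\overline{\mathcal C_1}$ and $\overline{\mathcal C_2}$ is an interval $[\beta,\gamma]$ containing no integer, so $\gamma-\beta\le 1$ and $[\beta,\gamma]\subseteq[m,m+1]$ for some integer $m\ge 0$. If $m=0$, then $\mathcal C_1\cup\mathcal C_2$ has diameter at most $1$ and the isodiametric inequality, together with the elementary estimate $\lambda_{\mathbb{E}^d}(\mathcal B_{\mathbb{E}^d})\le 2\lambda_{\mathbb{E}^d}(S_{\mathbb{E}^d})$, already gives $\lambda(\mathcal P)\le 2\lambda_{\mathbb{E}^d}(S_{\mathbb{E}^d})$. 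If $m\ge 1$, let $\eta$ be the direction of a shortest segment joining the two components; assuming for the moment that $\mathcal C_1$ and $\mathcal C_2$ are convex, the endpoints of that segment are exactly their extreme points in the direction $\eta$, whence a short computation gives that their widths $\omega_1,\omega_2$ in the direction $\eta$ satisfy $\omega_1+\omega_2\le\gamma-\beta\le 1$. Inequality~(\ref{ie_isoperimetric_width}) now bounds $\lambda(\mathcal C_i)$ by the volume of a spherical symmetric slice of diameter $1$ and width $\omega_i$, and — as recorded after the statement of Lemma~\ref{lemma_two_components} — the sum of two such volumes under $\omega_1+\omega_2\le 1$ is maximised at $\omega_1=\omega_2=\tfrac12$; hence $\lambda(\mathcal P)\le 2\lambda_{\mathbb{E}^d}(S_{\mathbb{E}^d})$.

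For the lower bound I would use the $p$-gon construction, but place the $n$ ball centres at $n$ \emph{consecutive} vertices $V_0,\dots,V_{n-1}$ of a regular $p$-gon of circumradius $k$, for a sufficiently large odd prime $p$ (in particular $p\ge 2n$). By Lemma~\ref{lemma_diagonals_p_gon} the numbers $2\sin(j\pi/p)$ with $1\le j\le n-1$ are irrational and linearly independent over $\mathbb{Q}$, so by Weyl's theorem (in fact Kronecker's density theorem) there are arbitrarily large integers $k$ with $\{2k\sin(j\pi/p)\}$ within $\varepsilon$ of $\tfrac12$ for all these $j$ at once. For such a $k$ taken large enough, every middle vertex $V_j$ lies at distance more than the ball diameter $1-\varepsilon$ from the line through $V_i$ and $V_l$ for all $i<j<l$, so no line meets three of the truncated balls; and the distance bookkeeping of the $p$-gon construction — each component has width $\tfrac12-\varepsilon$ towards every other centre, the centre distances $2k\sin(j\pi/p)$ have fractional part within $\varepsilon$ of $\tfrac12$, and any inter-component distance differs from the corresponding centre distance by less than $\tfrac12-\varepsilon+O(1/k)$ — shows that no two points in distinct components are at integral distance. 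Since the at most $n-1$ chord directions issuing from any vertex make an angle at most $(n-1)\pi/p$ with the tangent there, the $n-1$ truncating slabs of each component become nearly parallel as $p$ grows, so each truncated ball has volume tending to $\lambda_{\mathbb{E}^d}(S_{\mathbb{E}^d})$ when one lets first $p\to\infty$ and then $\varepsilon\to 0$. Therefore $f(\mathbb{E}^d,n)\ge n\lambda_{\mathbb{E}^d}(S_{\mathbb{E}^d})$, and combined with the upper bound this proves Theorem~\ref{main_thm}.

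The step I expect to be the main obstacle is the two-component upper bound, more precisely removing the convexity assumption so that Inequality~(\ref{ie_isoperimetric_width}) genuinely applies. Replacing each component by its convex hull keeps each diameter at most $1$ and only increases the volumes and the $\eta$-widths, but it can shrink the mutual distance $\beta$, so the crucial inequality $\omega_1+\omega_2\le 1$ must be extracted with care; what one really wants is to show that an extremal configuration of two components may be assumed to consist of two spherical symmetric slices. A second, more routine difficulty is arranging the order of the three limits $\varepsilon\to 0$, $p\to\infty$, $k\to\infty$ in the lower-bound construction, so that the $n-1$ truncations of each component collapse to a single slice $S_{\mathbb{E}^d}$ while the simultaneous Diophantine approximation of the $n-1$ diagonal lengths still holds for the chosen circumradius.
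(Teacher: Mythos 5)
Your overall strategy coincides with the paper's: the lower bound comes from the $p$-gon construction at $n$ consecutive vertices with an increasing sequence of primes $p_i$, Kronecker/Weyl supplying circumradii $k$ whose relevant diagonal lengths have fractional part near $\tfrac12$, and the observation that as $p_i\to\infty$ the $n$ chosen centers become nearly collinear, so the $n-1$ truncating slabs of each component collapse onto a single slice and the volume tends to $n\cdot\lambda_{\mathbb{E}^d}(S_{\mathbb{E}^d})$. This is exactly the paper's argument (the paper's sets $\mathcal{T}_i\subseteq\mathcal{P}_i$ with $\Delta_i\to 0$ play the role of your ``nearly parallel slabs'' step), and your ordering of the limits $k\to\infty$, then $p\to\infty$, then $\varepsilon\to 0$ is the intended one. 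The upper bound via Lemma~\ref{lemma_averaging} reducing to the two-component case, which is then handled by Inequality~(\ref{ie_isoperimetric_width}) and the extremality of $\omega_1=\omega_2=\tfrac12$, is also the route the paper points to in the remark following Lemma~\ref{lemma_two_components} (with the details deferred to \cite{mishkin_kurz}).

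The one genuine gap is the one you flag yourself: the two-component upper bound. Inequality~(\ref{ie_isoperimetric_width}) is a statement about \emph{convex} bodies, and your derivation of $\omega_1+\omega_2\le\gamma-\beta\le 1$ uses the supporting-hyperplane picture at a closest pair, which is only valid when $\mathcal{C}_1,\mathcal{C}_2$ are convex. Passing to convex hulls preserves the diameters and the maximal mutual distance $\gamma$ and only increases volumes and widths, but it can strictly decrease the minimal mutual distance, so the chain $\omega_1+\omega_2\le\gamma-\beta'$ may give a bound larger than $1$; your argument as written therefore does not close. What actually saves the day is the \emph{line-intersection} necessary condition rather than the distance-gap condition: for any direction, a line through both components meets $\mathcal{P}$ in total length at most $1$, and it is this constraint (applied to the convex hulls, for which line intersections only grow) that forces the two widths in the direction joining the components to sum to at most $1$; this is how the bound is extracted in \cite{mishkin_kurz}. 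Without that substitution the two-component case, and hence the entire upper bound, remains unproven in your write-up. Also note a small point in your $m=0$ branch: the inequality $\lambda_{\mathbb{E}^d}(\mathcal{B}_{\mathbb{E}^d})\le 2\lambda_{\mathbb{E}^d}(S_{\mathbb{E}^d})$ you invoke is true but should be checked (it amounts to $\int_0^{\pi/2}\cos^d\theta\,\mathrm{d}\theta\le 2\int_0^{\pi/6}\cos^d\theta\,\mathrm{d}\theta$), since it is not stated anywhere in the paper.
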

\begin{proof}(Sketch)\\
For a given number $n$ of components we choose an increasing sequence of primes $n<p_1<p_2<\dots$. For each $i=1,2,\dots$ we 
consider the $p$-gon construction with $p=p_i$ and $n$ neighbored vertices. If the scaling factor $k$ tends to infinity the 
$d$-dimensional volume of the resulting sequence of integral distance avoiding sets tends to the volume of the following set:
Let $\mathcal{P}_i$ arose as follows. Place an open ball of diameter $1$ at $n$ neighbored vertices of a regular $p_i$-gon with
radius $2n^2$. Cut off caps in the direction of the lines connecting each pair of centers so that the components have a width of
$\frac{1}{2}$ in that direction. In order to estimate the volume of $\mathcal{P}_i$ we consider another set $\mathcal{T}_i$, which 
arises as follows. Place an open ball of diameter $1-\Delta_i$ at $n$ neighbored vertices of a regular $p_i$-gon with
radius $2n^2$. Cut off caps in the direction of the $x$-axis so that the components have a width of $\frac{1}{2}-\Delta_i$. One can
suitably choose $\Delta_i$ so that $\mathcal{T}_i$ is \highlight{contained} in $\mathcal{P}_i$. Since the centers of the components 
of $\mathcal{P}_i$ tend to be \highlight{aligned, as $i$ increases,} $\Delta_i$ tends to $0$ as $i$ approaches infinity.
\highlight{We thus} have $\lim_{i\to\infty} \lambda_{\mathbb{E}^d}(\mathcal{T}_i)=n\cdot\lambda_{\mathbb{E}^d}(S_{\mathbb{E}^d})$.
\end{proof}

\section{Conclusion}
We have proposed the question for the maximum volume of an open set $\mathcal{P}$ consisting of $n$ components in an arbitrary normed space $X$
avoiding integral distances. For the Euclidean plane those sets need to have upper density $0$, see \cite{0738.28013}. 
Theorem~\ref{main_thm} proves a conjecture \highlight{stated in} \cite{mishkin_kurz} and some of the concepts \highlight{have been transferred} to more
general spaces. Nevertheless, many problems remain unsolved and provoke further research.

\small


\end{document}